\newcommand{\lyxmathsym}[1]{\ifmmode\begingroup\def\b@ld{bold}
  \text{\ifx\math@version\b@ld\bfseries\fi#1}\endgroup\else#1\fi}
\providecommand{\tabularnewline}{\\}
\numberwithin{equation}{section}
\numberwithin{figure}{section}
\theoremstyle{plain}
\newtheorem{thm}{\protect\theoremname}
  \theoremstyle{plain}
  \newtheorem{prop}[thm]{\protect\propositionname}
  \theoremstyle{plain}
  \newtheorem{cor}[thm]{\protect\corollaryname}
  \theoremstyle{remark}
  \newtheorem{rem}[thm]{\protect\remarkname}
\theoremstyle{definition}
\numberwithin{Theorem}{section} \numberwithin{equation}{section}
\def\CC{{\mathbb C}}
\def\DD{{\mathbb D}}
\def\FF{{\mathbb F}}
\def\HH{{\mathbb H}}
\def\PP{{\mathbb P}}
\def\QQ{{\mathbb Q}}
\def\RR{{\mathbb R}}
\def\ZZ{{\mathbb Z}}
\newcommand{\NS}{\operatorname{NS}}
  \providecommand{\corollaryname}{Corollary}
  \providecommand{\propositionname}{Proposition}
  \providecommand{\remarkname}{Remark}
\providecommand{\theoremname}{Theorem}
\begin{document}

\addtolength{\textwidth}{0mm}
\addtolength{\hoffset}{-0mm} 
\addtolength{\textheight}{0mm}
\addtolength{\voffset}{-0mm} 

\global\long\global\long\def\Alb{{\rm Alb}}
 \global\long\global\long\def\Aut{{\rm Aut}}
 \global\long\global\long\def\Jac{{\rm Jac}}
 \global\long\global\long\def\Hom{{\rm Hom}}
 \global\long\global\long\def\End{{\rm End}}
 \global\long\global\long\def\aut{{\rm Aut}}
 \global\long\global\long\def\NS{{\rm NS}}
 \global\long\global\long\def\SSm{{\rm S}}
 \global\long\global\long\def\psl{{\rm PSL}}
 \global\long\global\long\def\CC{\mathbb{C}}
 \global\long\global\long\def\BB{\mathbb{B}}
 \global\long\global\long\def\PP{\mathbb{P}}
 \global\long\global\long\def\QQ{\mathbb{Q}}
 \global\long\global\long\def\RR{\mathbb{R}}
 \global\long\global\long\def\FF{\mathbb{F}}
 \global\long\global\long\def\DD{\mathbb{D}}
 \global\long\global\long\def\NN{\mathbb{N}}
 \global\long\global\long\def\ZZ{\mathbb{Z}}
 \global\long\global\long\def\HH{\mathbb{H}}
 \global\long\global\long\def\gal{{\rm Gal}}
 \global\long\global\long\def\OO{\mathcal{O}}
 \global\long\global\long\def\pP{\mathfrak{p}}
 \global\long\global\long\def\pPP{\mathfrak{P}}
 \global\long\global\long\def\qQ{\mathfrak{q}}
 \global\long\global\long\def\mm{\mathcal{M}}
 \global\long\global\long\def\aaa{\mathfrak{a}}

\title{On the cohomology of Stover Surface}

\author{Amir D\v{z}ambi\'c, Xavier Roulleau}

\maketitle

\begin{abstract}
We study a surface discovered by Stover which is the surface with
minimal Euler number and maximal automorphism group among smooth arithmetic
ball quotient surfaces. We study the natural map $\wedge^{2}H^{1}(S,\CC)\to H^{2}(S,\CC)$
and we discuss the problem related to the so-called Lagrangian surfaces.
We obtain that this surface $S$ has maximal Picard number and has
no higher genus fibrations. We compute that its Albanese variety $A$
is isomorphic to $(\CC/\ZZ[\alpha])^{7}$, for $\alpha=e^{2i\pi/3}$.
\end{abstract}

\section{Introduction}

By the recent work of M. Stover \cite{Stover}, the number of automorphisms
of a smooth compact arithmetic ball quotient surface $X=\Gamma\backslash\mathbb{\mathbb{B}}_{2}$
is bounded by $288\cdot e(X)$, where $e(X)$ denotes the topological
Euler number of $X$. \\
Furthermore, Stover characterizes the arithmetic ball quotient surfaces
$X$ whose automorphism groups attain this bound, which by analogy
with Hurwitz curves, he calls \emph{Hurwitz ball quotients}; all such
surfaces are finite Galois coverings of the Deligne-Mostow orbifold
$\Lambda\backslash\mathbb{B}_{2}$ corresponding to the quintuple
$(2/12,2/12,2/12,7/12,11/12)$ (see \cite{Mostow, Stover}). \\
 Stover constructs also a Hurwitz ball quotient $S$ with Euler number
$e(S)=63$ and automorphism group $\Aut(S)$ isomorphic to $U_{3}(3)\times\mathbb{Z}/3\mathbb{Z}$,
of order $18144=2^{5}3^{4}7$. He shows that $S$ is the unique Hurwitz
ball quotient with Euler number $e=63$, and moreover that $e=63$
is the minimal possible value for the Euler number of a Hurwitz ball
quotient. Having this property the surface $S$ can be seen as the
2-dimensional analog of the Klein's quartic which is the unique curve
uniformized by the ball $\BB_{1}$ with minimal genus and maximal
possible automorphism group.\\
 Our aim is to study more closely the cohomology of this particular
surface $S$, which we will call \emph{Stover surface} in the following.
This surface $S$ has the following numerical invariants (see \cite{Stover}):

\smallskip
\begin{center}

\begin{tabular}{|c|c|c|c|c|c|}
\hline 
$e(S)$ & $H_{1}(S,\ZZ)$ & $q$ & $p_{g}=h^{2,0}$ & $h{}^{1,1}$  & $b_{2}(S)$\tabularnewline
\hline 
$63$ & $\ZZ^{14}$ & $7$ & $27$ & $35$  & $89$\tabularnewline
\hline 
\end{tabular}

\end{center}
\smallskip

Let $V$ be a vector space. Let us recall that a $2$-vector $w\in\wedge^{2}V$
has \textit{rank 1} or is \textit{decomposable} if there are vectors
$w_{1},w_{2}\in V$ with $w=w_{1}\wedge w_{2}$. A vector $w\in\wedge^{2}V$
has \textit{rank $2$} if there exist linearly independent vectors
$w_{i}\in V,\, i=1,..,4$ such that $w=w_{1}\wedge w_{2}+w_{3}\wedge w_{4}$. 

Let $B$ be an Abelian fourfold and let $p:S\to B$ be a map such
that $p(S)$ generates $B$. We say that $S$ is \textit{Lagrangian
with respect to} $p$ if there exists a basis $w_{1},\dots,w_{4}$
of $p^{*}H^{0}(B,\Omega_{B})$ such that the rank 2 vector $w=w_{1}\wedge w_{2}+w_{3}\wedge w_{4}$
is in the kernel of the natural map $\phi^{2,0}:\wedge^{2}H^{0}(S,\Omega_{S})\to H^{0}(S,K_{S})$.
\begin{thm}
\label{thm: MAIN}The surface $S$ has maximal Picard number. The
natural map 
\[
\phi^{1,1}:\, H^{0}(S,\Omega_{S})\otimes H^{1}(S,\mathcal{O}_{S})\to H^{1}(S,\Omega_{S})
\]
 is surjective with a $14$-dimensional kernel. The kernel of the
map
\[
\phi^{2,0}:\wedge^{2}H^{0}(S,\Omega_{S})\to H^{0}(S,K_{S})
\]
is $7$-dimensional and contains no decomposable elements. The set
of rank $2$ vectors in $Ker(\phi^{2,0})$ is a quadric hypersurface.\\
There exists an infinite number (up to isogeny) of maps $p:S\to B$
(where $B$ is an Abelian fourfold) such that $S$ is Lagrangian with
respect to $p$.\\
The Albanese variety of $S$ is isomorphic to $(\CC/\ZZ[\alpha])^{7}$,
for $\alpha=e^{2i\pi/3}$.
\end{thm}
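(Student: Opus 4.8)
The plan is to exploit the full automorphism group $G=\Aut(S)=U_{3}(3)\times\ZZ/3\ZZ$, which acts on every cohomology group and makes the cup-product maps $\phi^{1,1}$ and $\phi^{2,0}$ $G$-equivariant, so that Schur's lemma controls their kernels and images. The decisive structural input is the exceptional isomorphism $G_{2}(2)'\cong U_{3}(3)$: as a $U_{3}(3)$-module, $H^{0}(S,\Omega_{S})$ is the restriction of the fundamental $7$-dimensional representation $\mathbf{7}$ of $G_{2}$, and the central factor $\ZZ/3\ZZ=\langle\sigma\rangle$ acts on it by a primitive cube root of unity $\alpha=e^{2i\pi/3}$. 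Granting this, the $G_{2}$-invariant tensors on $\mathbf{7}$ --- the quadratic form $Q$ coming from $G_{2}\subset SO(7)$ and the invariant $3$-form $\phi$ --- become available, and I will use them to pin down each assertion.

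I would settle the Albanese first, as it underlies the Picard-number statement. Since $\sigma$ acts on $H^{1,0}=H^{0}(S,\Omega_{S})$ by $\alpha$, it acts on $H^{0,1}=\overline{H^{1,0}}$ by $\alpha^{2}$ and so has no eigenvalue $1$ on $H^{1}(S,\CC)$. Consequently $H_{1}(S,\ZZ)=\ZZ^{14}$ becomes a module over $\ZZ[\alpha]$ through $\alpha\mapsto\sigma_{*}$; being finitely generated and torsion-free over the principal ideal domain $\ZZ[\alpha]$, it is free of rank $7$, whence $H_{1}(S,\ZZ)\cong\ZZ[\alpha]^{7}$. As $\sigma$ acts on the tangent space $T_{0}\Alb(S)$ by the scalar $\alpha$, the uniformising lattice embeds $\ZZ[\alpha]$-linearly into $\CC^{7}$ for the scalar action, and after a complex-linear change of coordinates this is the standard diagonal embedding; therefore $\Alb(S)\cong(\CC/\ZZ[\alpha])^{7}$.

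For $\phi^{2,0}$ I would use $\wedge^{2}\mathbf{7}=\mathbf{7}\oplus\mathbf{14}$, where $\mathbf{14}=\mathfrak{g}_{2}$ is the adjoint and the summand $\mathbf{7}$ is the image of $v\mapsto\iota_{v}\phi$; here one checks via the character table that both summands stay irreducible and inequivalent on restriction to $U_{3}(3)$. Verifying from the $U_{3}(3)$-module structure of $H^{0}(S,K_{S})$ that the latter contains a copy of $\mathbf{14}$ but no copy of $\mathbf{7}$, $G$-equivariance forces $\phi^{2,0}$ to be injective on $\mathbf{14}$ and zero on $\mathbf{7}$, so $Ker(\phi^{2,0})\cong\mathbf{7}$ is $7$-dimensional. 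Writing a kernel element as $w=\iota_{v}\phi$, the polynomial maps $v\mapsto w\wedge w\in\wedge^{4}\mathbf{7}$ and $v\mapsto w\wedge w\wedge w\in\wedge^{6}\mathbf{7}\cong\mathbf{7}^{*}$ are $G_{2}$-equivariant and hence, by Schur, of the form $w\wedge w=c_{1}Q(v)\,\ast\phi+c_{2}\,\pi_{27}(v\otimes v)$ and $w\wedge w\wedge w=c_{3}\,Q(v)\,v^{\flat}$, where $\pi_{27}$ projects onto the $\mathbf{27}$-summand of $\mathrm{Sym}^{2}\mathbf{7}$ and $v^{\flat}=Q(v,-)$. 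The second identity shows that the rank-$\le2$ locus in $Ker(\phi^{2,0})$ is exactly the quadric $\{Q=0\}$; the first, together with $\pi_{27}(v\otimes v)\neq0$ for every $v\neq0$, shows $w\wedge w\neq0$ there, so there are no decomposable vectors in the kernel. A parallel bookkeeping for $\phi^{1,1}\colon\mathbf{7}\otimes\mathbf{7}^{*}\to H^{1,1}(S)$, using $\mathbf{7}\otimes\mathbf{7}^{*}=\mathbf{1}\oplus\mathbf{7}\oplus\mathbf{14}\oplus\mathbf{27}$ and matching $U_{3}(3)$-constituents against the $35$-dimensional $H^{1,1}(S)$, would show every constituent of the target occurs in the source, giving surjectivity and hence a kernel of dimension $49-35=14$.

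Maximal Picard number then follows formally: the image of $\wedge^{2}H^{1}(S)\to H^{2}(S)$ is the pullback of $H^{2}(\Alb(S))$, and its $(1,1)$-part is $\mathrm{Im}(\phi^{1,1})=H^{1,1}(S)$ by surjectivity; as $\Alb(S)\cong(\CC/\ZZ[\alpha])^{7}$ has complex multiplication, its N\'eron--Severi group already spans $H^{1,1}(\Alb(S))$, and these classes pull back to algebraic classes, so $\NS(S)\otimes\QQ=H^{1,1}(S)$ and $\rho(S)=35$. For the Lagrangian assertion, the quadric $\{Q=0\}$ is defined over $\QQ(\alpha)$ and, carrying one rational point, carries infinitely many; each rational rank-$2$ vector $w=w_{1}\wedge w_{2}+w_{3}\wedge w_{4}$ spans a $\QQ(\alpha)$-rational $4$-dimensional subspace $W=\langle w_{1},\dots,w_{4}\rangle\subset H^{0}(S,\Omega_{S})$, which under the $\ZZ[\alpha]$-structure on $H_{1}(S,\ZZ)$ descends to an Abelian fourfold $B$ together with a map $p\colon S\to\Alb(S)\to B$ with $p^{*}H^{0}(B,\Omega_{B})=W$; since $w\in Ker(\phi^{2,0})$ by construction, $S$ is Lagrangian with respect to $p$, and distinct rational points yield the desired infinitude of maps. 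The principal obstacle is not this deduction but the two representation-theoretic inputs it rests on: establishing that $H^{0}(S,\Omega_{S})$ is precisely the $G_{2}$-module $\mathbf{7}$ with central character $\alpha$, and determining the $G$-module structures of $H^{0}(S,K_{S})$ and $H^{1,1}(S)$ finely enough to read off the ranks of $\phi^{2,0}$ and $\phi^{1,1}$; these demand the character table of $U_{3}(3)$ together with a holomorphic-Lefschetz computation of the traces of the elements of $G$ on the higher Hodge groups.
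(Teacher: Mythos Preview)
Your overall architecture is sound and in one respect sharper than the paper's: your use of the exceptional isomorphism $U_3(3)\cong G_2(2)'$ and the $G_2$-invariant tensors to analyse $Ker(\phi^{2,0})$ is a genuine alternative to what the authors do. The paper never invokes $G_2$; instead it writes down explicit $7\times 7$ matrix generators for $\chi_3$, computes a $21\times 7$ matrix whose columns span $Ker(\phi^{2,0})\subset\wedge^2\chi_3$, and then verifies in a computer-algebra system both that the decomposable locus agrees set-theoretically with $\{w_1\wedge w_2=0\}$ (a radical-of-ideal check) and that the $6\times 6$ minors of the generic skew matrix cut out a single quadric. Your Schur-lemma identities $w\wedge w\wedge w=c_3\,Q(v)\,v^{\flat}$ and $w\wedge w=c_1\,Q(v)\,{*\phi}+c_2\,\pi_{27}(v\otimes v)$ recover both statements conceptually, once you check $c_2,c_3\neq 0$ (evaluate at a single $v$ with $Q(v)\neq 0$, where $\iota_v\phi$ is nondegenerate on $v^\perp$). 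This is cleaner than the paper's brute-force verification and explains \emph{why} the rank-$2$ locus is a quadric: it is the unique $U_3(3)$-invariant quadric on $\mathbf{7}$.

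The real divergence, and the real gap, is in how the module identifications you ``grant'' are obtained. You propose to pin down the $G$-module structure of $H^0(S,\Omega_S)$, $H^0(S,K_S)$ and $H^{1,1}(S)$ via holomorphic and topological Lefschetz, i.e.\ from fixed-point data of automorphisms; this is plausible in principle but you would need the fixed loci on $S$, which are not given. The paper takes a completely different route: it uses Sullivan's exact sequence for the second lower central quotient of $\pi_1(S)$ and a \textsc{magma} computation of $[\Pi,\Pi]/[[\Pi,\Pi],\Pi]\cong(\ZZ/4\ZZ)\times\ZZ^{28}$ to obtain that $\dim\ker\bigl(\wedge^2H^1(S,\RR)\to H^2(S,\RR)\bigr)=28$. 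This single number, together with the character table, forces $H^1(S,\ZZ)\cong\chi_3^{\oplus 2}$ (eliminating $\chi_4\oplus\chi_5$ because no subrepresentation of $\chi_4\otimes\chi_5=\chi_1\oplus\chi_7\oplus\chi_{10}$ avoiding the ample $\chi_1$ has the right parity), and then the splitting $28=2\cdot 7+14$ pins down $\ker\phi^{2,0}\cong\chi_3$ and $\ker\phi^{1,1}\cong\chi_6$ simultaneously. Likewise, the fact that the central $\sigma$ acts on $H^{1,0}$ by a primitive cube root of unity is established in the paper by showing (again in \textsc{magma}) that the quotient orbifold $S/\sigma$ has trivial orbifold fundamental group, hence no $\sigma$-invariant $1$-forms. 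Your deduction of $\Alb(S)\cong(\CC/\ZZ[\alpha])^7$ and of maximal Picard number from these inputs matches the paper's, and your Lagrangian argument is essentially the same (the paper exhibits an explicit $\QQ(\alpha)$-point on the quadric and uses that $\QQ[\alpha][G]$ surjects onto $M_7(\QQ(\alpha))=\End(A)\otimes\QQ$ to manufacture the maps $p$). In short: your $G_2$ method buys a conceptual, computation-free description of the quadric and the non-decomposability; the paper's nilpotent-quotient computation buys the hard numerical input that actually determines the kernel dimensions, and that is what your proposal still lacks.
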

By the Castelnuovo - De Franchis Theorem, the fact that there are
no decomposable elements in $\wedge^{2}H^{0}(S,\Omega_{S})$ means
that $S$ has no fibration $f:S\to C$ onto a curve of genus $g>1$.
Moreover Theorem \ref{thm: MAIN} implies that $S$ has the remarkable
feature that both maps 
\[
\begin{array}{c}
\phi^{2,0}:\,\wedge^{2}H^{1,0}(S)\to H^{2,0}(S)\\
\phi^{1,1}:\, H^{1,0}(S)\otimes H^{0,1}(S)\to H^{1,1}(S)
\end{array}
\]
have a non-trivial kernel. With Schoen surfaces (see \cite[Remark 2.6]{Ciliberto}),
this is the second example of surfaces enjoying such properties. For
more on this subject, see e.g. \cite{Barja, Bogomolov, BPS, Campana, Causin}. 

We obtain these results using Sullivan's theory on the second lower
quotient of the fundamental group $\pi_{1}(S)$ of $S$ (see \cite{BeauvilleFund}).

For the motivation and a historic account of surfaces with maximal
Picard number we refer to \cite{BeauvilleMax}.

\textbf{Aknowledgements} We are grateful to Marston Conder and Derek
Holt for their help in the computations of Theorem \ref{thm:Let--be Marston}.

\section{The Second lower central quotient of the fundamental group of $S$}

Let $\Pi:=\pi_{1}(X)$ be the fundamental group of a manifold $X$.
The group $H_{1}(X,\ZZ)$ is the abelianization of $\Pi$: $H_{1}(X,\ZZ)=\Pi/\Delta$
where $\Delta:=[\Pi,\Pi]$ is the derived subgroup of $\Pi$, that
is, the subgroup generated by all elements $[h,g]=g^{-1}h^{-1}gh$,
$h,g\in\Pi$.\\
 The second group in the lower central series $[\Delta,\Pi]$ is the
group generated by commutators $[h,g],$ with $h\in\Delta$, $g\in\Pi$.
It is a normal subgroup of the commutator group $\Delta$. According
to \cite{BeauvilleFund}, we have the following results:
\begin{prop}
(Sullivan) Let $X$ be a compact connected K\"{a}hler manifold. There
exists an exact sequence
\[
0\to\mbox{Hom}(\Delta/[\Delta,\Pi],\RR)\to\wedge^{2}H^{1}(X,\RR)\to H^{2}(X,\RR).
\]
(Beauville) Suppose $H_{1}(X,\ZZ)$ is torsion free. Then the group
$\Delta/[\Delta,\Pi]$ is canonically isomorphic to the cokernel of
the map 
\[
\mu:H_{2}(X,\ZZ)\to Alt^{2}(H^{1}(X,\ZZ))\,\, given\, by\,\,\mu(\sigma)(a,b)=\sigma\cap(a\wedge b),
\]
 where $Alt^{2}(H^{1}(X,\ZZ))$ is the group of skew-symmetric integral
bilinear forms on $H^{1}(X,\ZZ)$. 
\end{prop}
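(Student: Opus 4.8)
The plan is to deduce the whole proposition from one piece of commutator calculus together with linear duality, regarding the two displayed assertions as an integral statement and its real dual. First I would observe that Beauville's map is, essentially by its own definition, the transpose of cup product: since $\mu(\sigma)(a,b)=\sigma\cap(a\wedge b)=\langle a\cup b,\sigma\rangle$, under the universal-coefficient identifications $H^{1}(X,\RR)=\Hom(H_{1}(X,\RR),\RR)$ and $H^{2}(X,\RR)=\Hom(H_{2}(X,\RR),\RR)$ the map $\mu\otimes\RR$ is exactly $c^{*}$, where $c:\wedge^{2}H^{1}(X,\RR)\to H^{2}(X,\RR)$ is the cup product. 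Granting this, everything reduces to the Beauville half, namely to identifying $\operatorname{coker}\mu$ with $\Delta/[\Delta,\Pi]$; once that integral short exact sequence is in hand, the Sullivan sequence follows formally.

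For the Sullivan half I would argue purely by linear algebra. Assuming the commutator sequence
\[
H_{2}(X,\ZZ)\xrightarrow{\ \mu\ }\wedge^{2}H_{1}(X,\ZZ)\xrightarrow{\ \beta\ }\Delta/[\Delta,\Pi]\longrightarrow 0,
\]
exact at the last two terms, I tensor with $\RR$ (preserving exactness) and apply $\Hom(-,\RR)$, which is exact on $\RR$-vector spaces. This turns the surjection $\beta$ into an injection $\Hom(\Delta/[\Delta,\Pi],\RR)\hookrightarrow\wedge^{2}H^{1}(X,\RR)$ whose image is $\ker(\mu^{*})$. Since $\mu^{*}=c$ is the cup product, that image is $\ker c$, giving exactly the exact sequence $0\to\Hom(\Delta/[\Delta,\Pi],\RR)\to\wedge^{2}H^{1}\xrightarrow{c}H^{2}$ asserted by Sullivan.

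The substance is therefore the integral identification $\Delta/[\Delta,\Pi]\cong\operatorname{coker}\mu$, i.e. the exactness at $\wedge^{2}H_{1}$ of the commutator sequence, and my tool would be Hopf's formula. Writing $\Pi=F/R$ with $F$ free, the commutator induces a surjection $\beta:\wedge^{2}(\Pi^{\mathrm{ab}})\twoheadrightarrow\Pi_{2}/\Pi_{3}=\Delta/[\Delta,\Pi]$ (alternating because $[g,g]=1$), so the task is to compute $\ker\beta$. Using the classical $F_{2}/F_{3}\cong\wedge^{2}(F^{\mathrm{ab}})$ for the lower central quotients of a free group, the inclusion $[F,R]\subseteq F_{3}$, and the modular law, one obtains $\Pi_{2}/\Pi_{3}\cong F_{2}/\bigl(F_{3}\,(F_{2}\cap R)\bigr)$; Hopf's formula $H_{2}(\Pi,\ZZ)\cong(F_{2}\cap R)/[F,R]$ then realises $\ker\beta$ as the image of the natural map $H_{2}(\Pi,\ZZ)\to\wedge^{2}(\Pi^{\mathrm{ab}})$. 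Finally, the Hopf exact sequence $\pi_{2}(X)\to H_{2}(X,\ZZ)\to H_{2}(\Pi,\ZZ)\to 0$ shows $H_{2}(X,\ZZ)\twoheadrightarrow H_{2}(\Pi,\ZZ)$, and since $\mu$ factors through this surjection, $\operatorname{im}\mu=\ker\beta$, whence $\operatorname{coker}\mu\cong\Delta/[\Delta,\Pi]$. Here the torsion-freeness of $H_{1}$ is used exactly to identify $Alt^{2}(H^{1})\cong\wedge^{2}H_{1}$ as free $\ZZ$-modules and to keep the computation integral.

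I expect the main obstacle to be the last compatibility: matching the two a priori different descriptions of the map $H_{2}\to\wedge^{2}H_{1}$, namely the cohomological one $\sigma\mapsto(a\wedge b\mapsto\sigma\cap a\cup b)$ and the group-theoretic Hopf/commutator map coming from $(F_{2}\cap R)/[F,R]\to F_{2}/F_{3}\cong\wedge^{2}(\Pi^{\mathrm{ab}})$. I would prove their agreement by comparing the cup product on $H^{1}(B\Pi)$ with the commutator pairing through the bar resolution of $\ZZ$ over $\ZZ[\Pi]$, where an antisymmetrised degree-two bar cochain computes both sides simultaneously. A secondary technical point is bookkeeping for presentations in which $\Pi$ needs more generators than $\operatorname{rank}H_{1}$, where one must separate the rank-reducing relators from the genuine $H_{2}$-contribution before projecting to $\wedge^{2}(\Pi^{\mathrm{ab}})$; this is handled by the standard commutator exact sequence. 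I would also remark that the K\"ahler hypothesis is not actually needed for this degree-two statement: it becomes essential only for the $1$-formality of $X$, which controls the full Malcev Lie algebra rather than merely $\Delta/[\Delta,\Pi]$.
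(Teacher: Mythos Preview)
The paper does not give its own proof of this proposition: it is quoted from the literature, with the attribution ``According to \cite{BeauvilleFund}, we have the following results'' preceding the statement. So there is nothing to compare your argument against within the paper itself.

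That said, your proposal is a correct and standard route to the result, and is in fact essentially the argument Beauville gives in \cite{BeauvilleFund}. The reduction of Sullivan's real exact sequence to the integral Beauville statement by dualising is sound, and the identification of $\operatorname{coker}\mu$ with $\Delta/[\Delta,\Pi]$ via Hopf's formula, the lower-central-series computation $F_2/F_3\cong\wedge^2(F^{\mathrm{ab}})$ for free $F$, and the Hopf sequence $\pi_2(X)\to H_2(X,\ZZ)\to H_2(\Pi,\ZZ)\to 0$ is exactly how this is done. Your anticipated ``main obstacle''---matching the cap-product description of $\mu$ with the group-theoretic commutator map---is indeed the one genuine verification left, and your suggestion to check it on the bar resolution is the right one; this compatibility is classical and appears for instance in the discussion of the dual of cup product on $H^1$ of a $K(\Pi,1)$. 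Your remark that the K\"ahler hypothesis is not used in degree two is also correct: Sullivan's theorem in its full strength (formality) needs it, but this truncated statement does not.
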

In the case of the Stover surface, computer calculations give us the
following result:
\begin{thm}
\label{thm:Let--be Marston}Let $\Pi=\pi_{1}(S)$ be the fundamental
group of the Stover surface and $\Delta=[\Pi,\Pi]$. The group $\Delta/[\Delta,\Pi]$
is isomorphic to $\ZZ/4\ZZ\times\ZZ^{28}$.\end{thm}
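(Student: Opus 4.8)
The statement is established by explicit machine computation, and the plan is to reduce it to a nilpotent-quotient calculation for a finite presentation of $\Pi=\pi_{1}(S)$. The first step is to produce such a presentation. Being a Hurwitz ball quotient, $S$ is, by Stover's construction, a finite Galois covering of the Deligne--Mostow orbifold $\Lambda\backslash\BB_{2}$ attached to $(2/12,2/12,2/12,7/12,11/12)$, and since $|\Aut(S)|=288\cdot 63=18144$ equals the orbifold degree, $\Pi$ is realised as the kernel of a surjection $\Lambda\twoheadrightarrow\Aut(S)\cong U_{3}(3)\times\ZZ/3\ZZ$ from the orbifold fundamental group $\Lambda$. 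Starting from one of the known finite presentations of $\Lambda$ (Mostow, Deligne--Mostow), one identifies this finite quotient and the corresponding normal subgroup, and then runs a Reidemeister--Schreier coset enumeration over the $18144$ cosets to obtain a finite presentation of $\Pi$; this is the step where the help of M.~Conder and D.~Holt is used. A first mandatory check is that the abelianization of the resulting presentation is $\ZZ^{14}$, in agreement with the table above.

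Given a finite presentation of $\Pi$, the group $\Delta/[\Delta,\Pi]=\gamma_{2}(\Pi)/\gamma_{3}(\Pi)$ is computed by a nilpotent-quotient algorithm, e.g.\ the \texttt{nq} package in GAP or the analogous Magma routine: one computes the class-$2$ nilpotent quotient $\Pi/\gamma_{3}(\Pi)$, which is a finitely generated class-$2$ nilpotent group fitting in a central extension
\[
0\longrightarrow\Delta/[\Delta,\Pi]\longrightarrow\Pi/\gamma_{3}(\Pi)\longrightarrow H_{1}(S,\ZZ)\longrightarrow0,
\]
and then reads off the abelian group $\Delta/[\Delta,\Pi]$ from the consistent polycyclic presentation produced by the algorithm. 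Equivalently, by Beauville's result recalled above, $\Delta/[\Delta,\Pi]$ is the cokernel of $\mu\colon H_{2}(S,\ZZ)\to\mathrm{Alt}^{2}(H^{1}(S,\ZZ))$, so the same answer can be reached by writing the $2$-cycles coming from the relators of the presentation (via Hopf's formula) as integral skew forms on $\ZZ^{14}$ and taking the Smith normal form of the corresponding matrix. Either route yields $\Delta/[\Delta,\Pi]\cong\ZZ/4\ZZ\times\ZZ^{28}$.

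It remains to check the output. The free rank $28$ is forced a priori by Sullivan's exact sequence in the Proposition above, which gives $\mathrm{rank}\,\big(\Delta/[\Delta,\Pi]\big)=\dim_{\RR}\ker\big(\wedge^{2}H^{1}(S,\RR)\to H^{2}(S,\RR)\big)$; since $\binom{14}{2}=91$ while $b_{2}(S)=89$, this kernel has dimension at least $2$, and the Hodge-type analysis carried out for Theorem~\ref{thm: MAIN} shows it is exactly $7+14+7=28$. Thus the only piece of genuinely new information in Theorem~\ref{thm:Let--be Marston} is the $4$-torsion, and I would confirm it by rerunning the computation from a second, independently obtained presentation of $\Pi$ (for instance via a different intermediate subgroup of $\Lambda$, or from explicit matrix generators of the arithmetic lattice) and with a second computer algebra system. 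The main obstacle is precisely the size and reliability of this bookkeeping: the Reidemeister--Schreier presentation of $\Pi$ has a large number of relators, and the class-$2$ nilpotent quotient of a group with $14$ abelianized generators involves up to $\binom{14}{2}=91$ candidate central generators together with many consistency relations, so the computation must be organised and cross-checked carefully rather than trusted from a single run --- and one wrong or missing relator in the presentation of $\Pi$ would change the answer.
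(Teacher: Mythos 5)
Your proposal is essentially the paper's proof: the theorem is established by exactly the machine computation you describe --- the paper realises $\Pi$ as the kernel of an explicit epimorphism $\Lambda\to U_{3}(3)\times\ZZ/3\ZZ$ (factoring through the unique index-$3$ subgroup $\Gamma=\langle u,jb,bj\rangle$ and the degree-$28$ permutation representation of $U_{3}(3)$), rewrites to obtain a presentation of $\Pi$, and then applies \texttt{NilpotentQuotient}$(\cdot,2)$, \texttt{DerivedGroup} and \texttt{AQInvariants} to read off $\Delta/[\Delta,\Pi]\cong\ZZ/4\ZZ\times\ZZ^{28}$. One caveat about your proposed sanity check: confirming the free rank $28$ ``a priori'' via the Hodge-theoretic analysis of Theorem \ref{thm:The-image-of} is circular, since in the paper that analysis takes the rank $28$ (through Corollary \ref{cor:Therefore-we-have}) as its input in the identity $28=k+2d$; the only genuinely independent a priori information is the lower bound $\binom{14}{2}-b_{2}(S)=2$, so the rank, like the $4$-torsion, must come from the computation itself (or from an independent rerun, as you suggest).
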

\begin{proof}
By the construction of $S$ \cite{Stover}, the fundamental group
$\Pi$ is isomorphic to the kernel $ker(\varphi)$ of the unique epimorphism
$\varphi:\Lambda\longrightarrow G$ from the Deligne-Mostow lattice
$\Lambda$ corresponding to the quintuple $(2/12,2/12,2/12,7/12,11/12)$
onto the finite group $G=U_{3}(3)\times\mathbb{Z}/3\mathbb{Z}$. The
lattice $\Lambda$ is described by Mostow in \cite{Mostow} as a complex
reflection group, and by generators and relation by Cartwright and
Steger in \cite{CS}. This lattice has presentation 
\[
\Lambda=\langle j,u,v,b|u^{4},v^{8},[u,j],[v,j],j^{-3}v^{2},uvuv^{-1}uv^{-1},(bj)^{2}(vu^{2})^{-1},[b,vu^{2}],b^{3},(bvu^{3})^{3}\rangle.
\]
MAGMA command \verb+LowIndexSubgroups+ is used to identify the unique
subgroup $\Gamma\triangleleft\Lambda$ of index $3$, which is $\Gamma=\langle u,jb,bj\rangle$.
Using the primitive permutation representation of $U_{3}(3)$ of degree
$28$, MAGMA is able to identify an homomorphism $\varphi$ from $\Gamma$
onto $U_{3}(3)$ induced from the assignment

\begin{align*}
u\mapsto&(3,8,23,20)(4,24,6,12)(7,9,14,22)(10,19,11,13)(15,16,21,18)(17,26,27,25)\\
jb\mapsto&(1,9,20,12,19,23,6,16)(2,27,14,17,13,26,15,25)(3,24)(4,5,10,21,7,11,28,8)\\
bj\mapsto&(1,13,20,15,19,2,6,14)(4,9,10,12,7,23,28,16)(5,27,21,17,11,26,8,25)(22,24).
\end{align*}

This homomorphism extends to an homomorphism $\varphi$ from $\Lambda$
onto $G$ such that $\Pi=ker(\varphi)$ is a torsion-free normal subgroup
in $\Lambda$, it is the fundamental group of $S$ (see \cite{Stover}).
Let be $\Delta=[\Pi,\Pi]$ and $\Delta_{2}=[\Delta,\Pi]$. It is easy
to check that that $\Delta_{2}$ is distinguished into $\Pi$. The
image of $\Delta$ under the quotient map $\Pi\longrightarrow\Pi/\Delta_{2}$
is $\Delta/\Delta_{2}$, but we observe that it is also equal the
commutator subgroup $[\Pi/\Delta_{2},\Pi/\Delta_{2}]$, and therefore,
the computation of $\Delta/\Delta_{2}$ is reduced to the one of the
derived group $[\Pi/\Delta_{2},\Pi/\Delta_{2}]$. \\
The MAGMA command \verb+g:=Rewrite(G,g)+ is used to have generators
and relations of both subgroups $\Gamma<\Lambda$ and $\Pi<\Gamma$.
The command \verb+NilpotentQuotient(.,2)+ applied to $\Pi$ describes
$\Pi/\Delta_{2}$ in terms of a polycyclic presentation. The derived
subgroup $[\Pi/\Delta_{2},\Pi/\Delta_{2}]$ is obtained with \verb+DerivedGroup(.)+
applied to $\Pi/\Delta_{2}$. Finally, applying the MAGMA function
\verb+AQInvariants+ to $[\Pi/\Delta_{2},\Pi/\Delta_{2}]$, MAGMA
computes that the structure of $\Delta/\Delta_{2}$ is $\mathbb{Z}/4\mathbb{Z}\times\mathbb{Z}^{28}$.
\end{proof}

\begin{cor}
\label{cor:Therefore-we-have} The dimension of the kernel of $\wedge^{2}H^{1}(S,\RR)\to H^{2}(S,\RR)$
is $28$.
\end{cor}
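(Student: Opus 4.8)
The plan is to read the result directly off Sullivan's exact sequence in the Proposition above, combined with Theorem~\ref{thm:Let--be Marston}. First I would record that $S$, being a smooth compact arithmetic ball quotient surface, is a compact connected K\"ahler (in fact projective) manifold, so the first (Sullivan) part of the Proposition applies with $X=S$: writing $\Pi=\pi_{1}(S)$ and $\Delta=[\Pi,\Pi]$, there is an exact sequence
\[
0\to\Hom(\Delta/[\Delta,\Pi],\RR)\to\wedge^{2}H^{1}(S,\RR)\to H^{2}(S,\RR).
\]
In particular the kernel of the natural map $\wedge^{2}H^{1}(S,\RR)\to H^{2}(S,\RR)$ is canonically isomorphic to $\Hom(\Delta/[\Delta,\Pi],\RR)$, so its dimension equals $\dim_{\RR}\Hom(\Delta/[\Delta,\Pi],\RR)$.

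Next I would substitute the computation of Theorem~\ref{thm:Let--be Marston}, namely $\Delta/[\Delta,\Pi]\cong\ZZ/4\ZZ\times\ZZ^{28}$. Since $\Hom(-,\RR)$ is additive and $\Hom(\ZZ/4\ZZ,\RR)=0$ (a finite group admits no nontrivial homomorphism into the torsion-free group $\RR$), one gets $\Hom(\Delta/[\Delta,\Pi],\RR)\cong\Hom(\ZZ^{28},\RR)\cong\RR^{28}$. Hence the kernel is $28$-dimensional, which is the claim.

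There is essentially no obstacle here beyond checking the hypotheses of Sullivan's statement; the whole content of the corollary is carried by the group-theoretic computation of Theorem~\ref{thm:Let--be Marston}. One could optionally remark that the torsion summand $\ZZ/4\ZZ$ of $\Delta/[\Delta,\Pi]$, though invisible to this real-cohomology statement, encodes finer information about the $2$-type of $S$ and would intervene if one instead analysed the integral map $\wedge^{2}H^{1}(S,\ZZ)\to H^{2}(S,\ZZ)$.
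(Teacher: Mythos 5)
Your proposal is correct and is exactly the argument the paper intends (the corollary is left without an explicit proof precisely because it follows by combining Sullivan's exact sequence with Theorem~\ref{thm:Let--be Marston} and noting that $\Hom(\ZZ/4\ZZ\times\ZZ^{28},\RR)\cong\RR^{28}$). No difference in approach.
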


\section{Computation of the map $\wedge^{2}H^{1}(S,\CC)\to H^{2}(S,\CC)$}

Let $A$ be the Albanese variety of the Stover surface $S$. The invariants
are:

\[
\begin{array}{c}
H_{1}(A,\ZZ)=H_{1}(S,\ZZ)=\ZZ^{14},\ H_{2}(A,\ZZ)=\wedge^{2}H_{1}(A,\ZZ),\ H^{2,0}(A)=\wedge^{2}H^{1,0}(S)\\
H^{1,1}(A)=H^{1,0}(S)\otimes H^{0,1}(S),\ H^{0,2}(A)=\wedge^{2}H^{0,1}(S),
\end{array}
\]
and

\smallskip
\begin{center}

\begin{tabular}{|c|c|c|c|c|}
\hline 
$H_{1}(A,\ZZ)$ & $q$ & $h^{2,0}(A)$ & $h{}^{1,1}(A)$  & $b_{2}(A)$\tabularnewline
\hline 
$\ZZ^{14}$ & $7$ & $21$ & $49$  & $91$\tabularnewline
\hline 
\end{tabular}

\end{center}
\smallskip

We have a map respecting Hodge decomposition
\[
\begin{array}{c}
H^{2,0}(A)\oplus H^{1,1}(A)\oplus H^{0,2}(A)\\
\begin{array}{ccc}
\downarrow\,\,\,\,\,\,\, & \,\,\,\,\,\,\,\downarrow\,\,\,\,\,\,\, & \,\,\,\,\,\,\,\downarrow\end{array}\\
H^{2,0}(S)\oplus H^{1,1}(S)\oplus H^{0,2}(S)
\end{array},
\]
which is an equivariant map of $\Aut(S)$-modules. By Corollary \ref{cor:Therefore-we-have},
the kernel of that map is $28$ dimensional ; it is moreover a $\Aut(S)$-module. 

According to the Atlas tables \cite{Atlas}, the group $U_{3}(3)$
has $14$ irreducible representations $\chi_{i},\,1\leq i\leq14$
of respective dimension $1,6,7,7,7,14,21,21,21,27,28,28,32,32$. 

The irreducible representations of $\Aut(S)=U_{3}(3)\times\ZZ/3\ZZ$
are the $\chi_{i}^{t}$, $i=1,...,14,\, t=0,1,2$ where $(g,s)\in U_{3}(3)\times\ZZ/3\ZZ$
acts on the same space as $\chi_{i}$ with action $(g,s)\cdot v=\alpha^{s}g(v)$
with$\alpha=e^{2i\pi/3}$ a primitive third root of unity. 
\begin{thm}
\label{thm:The-image-of}The image of $S$ by the Albanese map $\vartheta:S\to A$
is $2$-dimensional. \\
The map $H^{1,1}(A)\to H^{1,1}(S)$ is surjective, with a $14$ dimensional
kernel isomorphic to $\chi_{6}^{0}$ as an $\Aut(S)$-module. We have
$H^{1}(S,\ZZ)=\chi_{3}^{1}\oplus\chi_{3}^{2}$ and $H^{1,1}(S)=\chi_{1}^{0}\oplus\chi_{3}^{0}\oplus\chi_{10}^{0}$,
as $\Aut(S)$-modules. \\
The kernel of the natural map $\wedge^{2}H^{0}(S,\Omega_{S})\to H^{0}(S,K_{S})$
is $7$-dimensional, isomorphic to $\chi_{3}^{0}$ as a $\Aut(S)$-module.\\
The surface $S$ has maximal Picard number. \\
The Albanese variety $A$ of $S$ is isomorphic to $(\CC/\ZZ[\alpha])^{7}$,
for $\alpha=e^{2i\pi/3}$.
\end{thm}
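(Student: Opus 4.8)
Everything hinges on feeding the output of Section~2, namely that the kernel $K$ of $\wedge^2 H^1(S,\RR)\to H^2(S,\RR)$ has dimension $28$ (Corollary~\ref{cor:Therefore-we-have}), into the representation theory of $\Aut(S)=U_3(3)\times\ZZ/3$: the point is that $K$ is simultaneously a rational sub-Hodge structure of $\wedge^2 H^1(S)=H^2(A,\QQ)$ and an $\Aut(S)$-submodule, and that $\phi^{2,0},\phi^{1,1}$ are $\Aut(S)$-equivariant. The first step is to identify $H^1(S,\CC)$ as an $\Aut(S)$-module. It is $14$-dimensional, equals $H^{1,0}(S)\oplus H^{0,1}(S)$ with $H^{0,1}(S)=\overline{H^{1,0}(S)}$, and since the orbifold quotients $\Lambda\backslash\BB_2$ and $S/U_3(3)$ have irregularity $0$ (the lattice $\Lambda$ and its index-$3$ subgroup have finite abelianizations), $H^{1,0}(S)$ has no $U_3(3)$-trivial constituent; being $7$-dimensional it is therefore one of the three $7$-dimensional irreducibles of $U_3(3)$, and a Lefschetz fixed-point computation of $\mathrm{tr}(g^{*}\mid H^1(S))$ for suitable $g\in\Aut(S)$ — with the fixed loci read off the orbifold uniformization — identifies it as $H^{1,0}(S)=\chi_3^{s}$, whence $H^{0,1}(S)=\chi_3^{-s}$ and $H^1(S,\ZZ)=\chi_3^{1}\oplus\chi_3^{2}$, with the central $\ZZ/3$ acting on $H^{1,0}(S)$ by the nontrivial scalar $\alpha^{s}$. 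I expect this identification — in particular, extracting that $\ZZ/3$ acts nontrivially on the holomorphic $1$-forms, equivalently $q(S/(\ZZ/3))=0$ — to be the main obstacle, since it is the one place where information about the lattice $\Lambda$ beyond the numerical invariants of $S$ is needed; it is also what produces the Eisenstein integers.

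From the character table of $U_3(3)$ (or, since $\chi_3$ is the restriction of the $7$-dimensional representation of $G_2(2)\supset U_3(3)$) one has the decompositions $\wedge^2\chi_3=\chi_6\oplus\chi_3$ (dimensions $14+7$) and $\chi_3\otimes\overline{\chi_3}=\chi_1\oplus\chi_{10}\oplus\chi_6\oplus\chi_3$ (dimensions $1+27+14+7$), so that, the $\ZZ/3$-action on $H^{1,0}\otimes H^{0,1}=\chi_3^{s}\otimes\chi_3^{-s}$ being trivial, all three Hodge pieces of $\wedge^2 H^1(S)$ are known as $\Aut(S)$-modules. Writing $K=K^{2,0}\oplus K^{1,1}\oplus K^{0,2}$ with $\dim K^{2,0}=\dim K^{0,2}=a$, $\dim K^{1,1}=b$ — so $K^{2,0}=\ker\phi^{2,0}$, $K^{1,1}=\ker\phi^{1,1}$ and $2a+b=28$ — one argues: $a\in\{0,7,14,21\}$ because $K^{2,0}$ is a submodule of $\wedge^2 H^{1,0}$; $b\ge 49-35=14$ because the image of $\phi^{1,1}$ lies in $H^{1,1}(S)$; hence $a\le 7$, so $a\in\{0,7\}$; and $a=0$ is impossible because then $K^{1,1}$ would be the only $28$-dimensional submodule of $H^{1,0}\otimes H^{0,1}$, namely $\chi_1^{0}\oplus\chi_{10}^{0}$, which contains the trivial summand $\chi_1^{0}$ — spanned by the class of a polarization of $A$ — whereas $\phi^{1,1}$ cannot kill it, the pullback of an ample class under the non-constant Albanese map being nonzero. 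Therefore $a=7$ and $b=14$: $\phi^{1,1}$ is surjective with $14$-dimensional kernel $K^{1,1}=\chi_6^{0}$, and $\ker\phi^{2,0}=K^{2,0}$ is the $7$-dimensional irreducible constituent $\chi_3$ of $\wedge^2 H^{1,0}$ (with $K^{0,2}=\overline{K^{2,0}}$).

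Next, $\ker\phi^{2,0}\cong\chi_3$ is $\Aut(S)$-irreducible, so the decomposable (rank $1$) vectors in it form an $\Aut(S)$-stable cone whose linear span is either $0$ or everything; using the further decompositions $\operatorname{Sym}^2\chi_3=\chi_1\oplus\chi_{10}$ and $\wedge^4\chi_3\cong\chi_1\oplus\chi_3\oplus\chi_{10}$, one checks that the $\Aut(S)$-equivariant quadratic map $w\mapsto w\wedge w$ on $\ker\phi^{2,0}$ has defining quadrics spanning $\operatorname{Sym}^2((\ker\phi^{2,0})^{*})$ — the non-degenerate invariant form on $\chi_3$ is among them — so its zero locus is $\{0\}$, and hence $\ker\phi^{2,0}$ contains no decomposable vector. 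By the Castelnuovo--de Franchis theorem $S$ then has no fibration onto a curve of genus $>1$; in particular the Albanese image $\vartheta(S)$ is not a curve, and not a point because $q>0$, so it is a surface.

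Finally, the central $\ZZ/3\subset\Aut(S)$ acts on $H^{1,0}(A)=H^{1,0}(S)$ by the scalar $\alpha^{s}$ and on $H^{0,1}(A)$ by $\alpha^{-s}$ with $s\neq 0$, so a generator defines $f\in\End(A)$ with $f^{3}=\mathrm{id}$, $f\neq\mathrm{id}$ and no eigenvalue $1$ on $H^1(A)$; thus $f^{2}+f+1=0$ in $\End(A)$ and $\ZZ[\alpha]\cong\ZZ[f]\subseteq\End(A)$. Then $H_1(A,\ZZ)=H_1(S,\ZZ)\cong\ZZ^{14}$ is a torsion-free module of rank $7$ over the principal ideal domain $\ZZ[\alpha]$, hence free of rank $7$, and as the complex structure on $H_1(A,\RR)$ is the $\ZZ[\alpha]$-linear one of signature $(7,0)$ it is the $7$-fold product of the complex structure of $\CC/\ZZ[\alpha]$, giving $A\cong(\CC/\ZZ[\alpha])^{7}$. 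For the Picard number of $S$ itself: by Corollary~\ref{cor:Therefore-we-have} the image of $\vartheta^{*}\colon H^2(A,\QQ)\to H^2(S,\QQ)$ is a $63$-dimensional sub-Hodge structure of Hodge type $(14,35,14)$, so its $(1,1)$-part is all of $H^{1,1}(S)$; since $A\cong(\CC/\ZZ[\alpha])^{7}$ satisfies $\NS(A)\otimes\QQ=H^{1,1}(A)\cap H^2(A,\QQ)$ with $\rho(A)=49=h^{1,1}(A)$, pulling these algebraic classes back along $\vartheta$ shows $H^{1,1}(S)$ is spanned by divisor classes, so $\rho(S)=h^{1,1}(S)=35$ and $S$ has maximal Picard number.
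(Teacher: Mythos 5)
Your overall strategy --- pushing the $28$-dimensional kernel of $\wedge^{2}H^{1}(S,\RR)\to H^{2}(S,\RR)$ through the representation theory of $\Aut(S)$ and splitting it along the Hodge decomposition with $2a+b=28$, $b\geq 14$, $a\in\{0,7,14,21\}$ --- is exactly the paper's, and your elimination of $a=0$ via the invariant polarization class is the same device the paper uses. But two steps that you leave as assertions are precisely the ones the paper has to work for. First, you reduce to $H^{1,0}(S)$ being one of the three $7$-dimensional irreducibles and then say a Lefschetz fixed-point computation ``identifies it as $\chi_{3}$''; that computation is never carried out, and it is not cosmetic: if $H^{1,0}(S)=\chi_{4}$ one gets $H^{1}(S,\ZZ)\otimes\CC=\chi_{4}\oplus\chi_{5}$ and $H^{1,1}(A)=\chi_{1}\oplus\chi_{7}\oplus\chi_{10}$, a case the paper excludes not by Lefschetz but by integrality of $H^{1}(S,\ZZ)$ (the traces of $\chi_{4}^{\oplus2}$, $\chi_{5}^{\oplus2}$ are not all in $\ZZ$) together with the same polarization-plus-parity count (the kernel of $\phi^{1,1}$ would have dimension $21$, $27$ or $48$, incompatible with $2a+b=28$). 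Your own dimension count would in fact dispose of this case as well, but as written the identification of $\chi_{3}$ rests on an unexecuted computation.

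Second, and more seriously, the entire CM part of the theorem --- $A\cong(\CC/\ZZ[\alpha])^{7}$ and hence the maximal Picard number --- hinges on the central $\ZZ/3\ZZ$ acting on $H^{1,0}(S)$ by a \emph{nontrivial} scalar, i.e.\ on $q(S/\sigma)=0$. You state this, correctly flag it as ``the main obstacle'', and then do not prove it; Schur's lemma only gives that $\sigma$ acts by some cube root of unity, and nothing in the numerical invariants excludes the trivial scalar. The paper settles this by a dedicated group-theoretic computation: it identifies a lift $\sigma'=j^{4}\in\Lambda$ normalizing $\Pi$, forms $\Pi'=\langle\Pi,\sigma'\rangle$, and computes with MAGMA that $\Pi'/\Pi'_{\mathrm{tors}}$ is trivial, so that $S/\sigma$ carries no holomorphic $1$-forms. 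Without this input your argument delivers $H^{1}(S,\ZZ)=\chi_{3}^{\oplus2}$, $\ker\phi^{2,0}\cong\chi_{3}$, $\ker\phi^{1,1}\cong\chi_{6}$ and the surjectivity of $\phi^{1,1}$, but not the structure of $A$ nor the maximality of the Picard number. (A smaller instance of the same issue: your claim that the components of $w\mapsto w\wedge w$ span all of $\mathrm{Sym}^{2}(\ker\phi^{2,0})^{*}$ is again an unverified ``one checks''; the paper establishes the absence of decomposable vectors by an explicit ideal computation in Theorem \ref{thm no decomp}, though for the present theorem the curve case of the Albanese image is already excluded by $2a\leq28$.)
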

Since $A$ is CM, it follows that $S$ is Albanese standard, that
is, the class of its image inside its Albanese variety A sits in the
subring of $H^{*}(A,\QQ)$ generated by the divisor classes. That
contrasts with the above mentioned Schoen surfaces, see \cite{Ciliberto}.
\begin{proof}
Suppose that the image of $S$ in $A$ is $1$-dimensional. Then there
exists a smooth curve $C$ of genus $7$ with a fibration $f:S\to C$
and the map $\wedge^{2}H^{0}(S,\Omega_{S})\to H^{0}(S,K_{S})$ is
the $0$ map and the kernel of $\wedge^{2}H^{1}(S,\CC)\to H^{2}(S,\CC)$
is at least $42$ dimensional, which is impossible. Thus the image
of $S$ by the Albanese map $\vartheta:S\to A$ is $2$-dimensional.

According to the Atlas character table \cite{Atlas}, the possibilities
for the $U_{3}(3)$-module $H_{1}(S,\ZZ)=H_{1}(A,\ZZ)=\ZZ^{14}$ are:
\[
\chi_{3}^{\oplus2},\mathcal{R}_{\ZZ}(\chi_{4})=\mathcal{R}_{\ZZ}(\chi_{5})=\chi_{4}\oplus\chi_{5},\,\chi_{4}^{\oplus2},\,\chi_{5}^{\oplus2}\,\mbox{or}\,\chi_{6}
\]
where $\mathcal{R}_{\ZZ}(\chi_{j})$ is the restriction to $\ZZ$
of the $7$-dimensional complex representation $\chi_{j}$ defined
over $\ZZ[i]$. It cannot be $\chi_{4}^{\oplus2}$ nor $\chi_{5}^{\oplus2}$
because these are not is not defined over $\ZZ$ (some traces of elements
are in $\ZZ[i]\setminus\ZZ$). We cannot have $H^{1}(S,\ZZ)=\chi_{6}$
since $\chi_{6}$ remains irreducible, but $H^{1}(S,\ZZ)\otimes\CC=H^{1,0}\oplus H^{0,1}$
is a Hodge decomposition on which the representation of $U_{3}(3)$
splits. 

By duality, the kernel of $H^{2,0}(A)\to H^{2,0}(S)$ has same dimension
$d$ as the kernel of $H^{0,2}(A)\to H^{0,2}(S)$. Let $k$ be the
dimension of the kernel of the $U_{3}(3)$-equivariant map $H^{1,1}(A)\to H^{1,1}(S)$.
We have $28=k+2d$, moreover since $h^{1,1}(S)=35$ and $h^{1,1}(A)=49$,
we get $28\geq k\geq14$.

Let us suppose that $H^{1}(S,\ZZ)=\chi_{4}\oplus\chi_{5}$. Then the
representation $H^{1,1}(A)$ equals to $\chi_{4}\otimes\chi_{5}=\chi_{1}+\chi_{7}+\chi_{10}$
(of dimension $1+21+27$). An Abelian variety on which a finite group
$G$ acts possesses a $G$-invariant polarization (for example $\sum_{g\in G}g^{*}L$,
where $L$ is some polarization). Therefore the one dimensional $\Aut(S)$-invariant
space of $H^{1,1}(A)$ is generated by the class of an ample divisor
and the natural map $\vartheta^{*}:H^{1,1}(A)\to H^{1,1}(S)$ is injective
on that subspace. Therefore the map $\vartheta^{*}$ has a kernel
of dimension $k=21,\,27$ or $48$. This is impossible because $k+2d$
equals $28$.

Hence, we have $H^{1}(S,\ZZ)=\chi_{3}^{\oplus2}$ and moreover
\[
H^{2,0}(A)=\wedge^{2}\chi_{3}=\chi_{3}\oplus\chi_{6}
\]
 (the dimensions are $21=7+14$) and 
\[
H^{1,1}(A)=\chi_{3}^{\otimes2}=\chi_{1}\oplus\chi_{3}\oplus\chi_{6}\oplus\chi_{10}
\]
 ($49=1+7+14+27$). By checking the possibilities, we obtain $k=14$,
$H^{1,1}(S)=\chi_{1}\oplus\chi_{3}\oplus\chi_{10}$, and the map $H^{1,1}(A)\to H^{1,1}(S)$
is surjective. The kernel of the map $H^{2,0}(A)\to H^{2,0}(S)$ is
isomorphic to $\chi_{3}$, of dimension $7$, the action of $U_{3}(3)$
on $H^{2,0}(S)$ is then $H^{2,0}(S)=\chi_{6}\oplus\chi,$ where $\chi$
is a $13$ dimensional representation.

Let $\sigma\in Aut(S)=U_{3}(3)\times\ZZ/3\ZZ$ be the order $3$ automorphism
commuting with every other element. It corresponds to an element $\sigma'\in\Lambda$
normalizing $\Pi$ in $\Lambda$ and such that the group $\Pi'$ generated
by $\Pi$ and $\sigma'$ contains $\Pi$ with index $3$. Using MAGMA,
one find that we can choose $\sigma'=j^{4}$, where $j$ is the order
$12$ element described in the proof of Theorem \ref{thm:Let--be Marston}.
\\
The quotient surface $S/\sigma$ of $S$ by $\sigma$ is equal to
$\BB_{2}/\Pi'$. The fundamental group of $S'$ is $\Pi'/\Pi'_{tors}$
where $\Pi'_{tors}$ is the subgroup of $\Pi'$ generated by torsion
elements. Using MAGMA, one find that $\Pi'$ has a set of $8$ generators
with $7$ of them which are torsion elements. Using these elements,
we readily compute that $\Pi'/\Pi'_{tors}$ is trivial. Therefore
the space of one-forms on $S$ that are invariant by $\sigma$ is
$0$. Using the symmetries of $U_{3}(3)$, one see that $\sigma$
acts on the tangent space $H^{0}(S,\Omega_{S})^{*}$ as the multiplication
by $\alpha$ or $\alpha^{2}$. After possible permutation of $\sigma$
and $\sigma^{2}$, we can suppose it is $\alpha$.

We see that the representation of $Aut(S)$ on $H_{1}(S,\ZZ)$ is
$\chi_{3}^{1}\oplus\chi_{3}^{2}$. The lattice $H_{1}(S,\ZZ)\subset H^{0}(S,\Omega_{S})^{*}$
is moreover a $\ZZ[\alpha]$-module. The ring $\ZZ[\alpha]$ is a
principal ideal domain, therefore $H_{1}(S,\ZZ)=\ZZ[\alpha]^{7}$
(for the choice of a certain basis) and $A$ is isomorphic to $(\CC/\ZZ[\alpha])^{7}$.\\
Therefore $A$ has maximal Picard number and all the classes of $H^{1,1}(A)$
are algebraic. These classes remain of course algebraic under the
map $H^{1,1}(A)\to H^{1,1}(S)$, which is surjective. Thus $S$ is
a surface with maximal Picard number. 
\end{proof}

\section{Lagrangian surfaces and Stover surface}

Let $B$ be an Abelian fourfold and let $p:S\to B$ be a map such
that $p(S)$ generates $B$. Let us recall that $S$ is Lagrangian
with respect to $p$ if there exists a basis $w_{1},\dots,w_{4}$
of $p^{*}H^{0}(B,\Omega_{B})$ such that the rank 2 vector $w=w_{1}\wedge w_{2}+w_{3}\wedge w_{4}$
is in the kernel of the natural map $\phi^{2,0}:\wedge^{2}H^{0}(S,\Omega_{S})\to H^{0}(S,K_{S})$.
Let us now prove
\begin{thm}
\label{thm no decomp}The $7$ dimensional space $Ker(\phi^{2,0})$
contains no decomposable elements. The algebraic set of rank $2$
vectors in $Ker(\phi^{2,0})$ is a quadric $\tilde{Q}\subset Ker(\phi^{2,0})$.
\\
There exists an infinite number (up to isogeny) of maps $p:S\to B$
where $B$ is an Abelian fourfold such that $S$ is Lagrangian with
respect to $p$. \\
There exists an infinite number (up to isogeny) of maps $p:S\to B$
where $B$ is an Abelian fourfold such that 
\[
\tilde{Q}\cap p^{*}H^{0}(B,\wedge^{2}\Omega_{B})=\{0\},
\]
and for some of them we even have $Ker(\phi^{2,0})\cap p^{*}H^{0}(B,\wedge^{2}\Omega_{B})=\{0\}.$\\
 The generic rank 2 element $w$ in $\tilde{Q}\subset Ker(\phi^{2,0})$
does not correspond to any morphism to an Abelian fourfold. \end{thm}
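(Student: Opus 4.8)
The plan is to reduce the first two assertions to linear algebra for the $7$-dimensional irreducible representation $V:=H^{0}(S,\Omega_{S})=H^{1,0}(S)$ of $U_{3}(3)$, and the last three to the CM structure of $A$ together with Hasse--Minkowski. By Theorem \ref{thm:The-image-of}, $V$ is the rational $7$-dimensional irreducible representation of $U_{3}(3)$ (twisted by a character of the central $\ZZ/3\ZZ$, which acts as a scalar and is irrelevant for ranks). Since $U_{3}(3)\subset G_{2}(\CC)$ with this representation realized as the standard $7$-dimensional one, $V$ carries a unique-up-to-scalar invariant nondegenerate symmetric form $q$ and a unique-up-to-scalar invariant alternating $3$-form $\phi$, hence a cross product $v\times y$ defined by $q(v\times y,z)=\phi(v,y,z)$, which after normalising $\phi$ satisfies $v\times(v\times y)=q(v,y)\,v-q(v,v)\,y$. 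Now $\wedge^{2}V\cong\chi_{3}\oplus\chi_{6}$ and $\mathrm{Ker}(\phi^{2,0})\cong\chi_{3}$ as $U_{3}(3)$-modules, so $K:=\mathrm{Ker}(\phi^{2,0})$ is forced to be the unique $\chi_{3}$-isotypic submodule of $\wedge^{2}V$, namely the image of the essentially unique equivariant embedding $\iota\colon V\hookrightarrow\wedge^{2}V$; one identifies $\iota(v)$ with the contraction $v\,\lrcorner\,\psi$, where $\psi\in\wedge^{3}V$ is $\phi$ raised by $q$, so that, read as a $q$-skew endomorphism of $V$, $\iota(v)$ is exactly $y\mapsto v\times y$.

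For Parts 1 and 2 I would compute $\mathrm{rank}(v\times-)$ from the displayed identity. If $q(v,v)\neq0$ then $\ker(v\times-)=\langle v\rangle$, so $v\times-$ has rank $6$ and $\iota(v)$ has rank $3$. If $q(v,v)=0$ and $v\neq0$, then $(v\times-)^{2}$ vanishes on $v^{\perp}$, while $\ker(v\times-)\subseteq v^{\perp}$ and $v\times v^{\perp}\subseteq\ker(v\times-)$; counting dimensions forces $\mathrm{rank}(v\times-)\leq4$, and equality holds because $G_{2}(\CC)$ acts transitively on nonzero null vectors of its $7$-dimensional representation, so the rank is constant there (an explicit split-octonion model, or a MAGMA check with the given matrices, confirms the constant is $4$, not $2$). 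Hence $\iota(v)$ has rank $3$ for $q(v,v)\neq0$ and rank exactly $2$ for $q(v,v)=0\neq v$, never rank $1$: so $K$ has no decomposable vectors (Part 1, whence by Castelnuovo--de Franchis $S$ has no genus $>1$ fibration), and the rank-$\leq2$ locus of $K$ equals $\iota(\{q=0\})$, the image under the linear isomorphism $\iota$ of a nonsingular quadric cone, i.e. a nonsingular quadric hypersurface $\tilde Q\subset K$ (Part 2). Since $K$ has no rank-$1$ vectors, a rank-$2$ element $w=\iota(v)\in\tilde Q$ has a well-defined $4$-plane of support $W_{w}:=\mathrm{im}(v\times-)\subset V$, and $S$ is Lagrangian with respect to $p\colon S\to B$ precisely when $p^{*}H^{0}(B,\Omega_{B})=W_{w}$ for some $w\in\tilde Q$; also $p^{*}H^{0}(B,\wedge^{2}\Omega_{B})=\wedge^{2}p^{*}H^{0}(B,\Omega_{B})$.

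For Parts 3--5 I bring in arithmetic. From $A\cong(\CC/\ZZ[\alpha])^{7}$ the space $H^{1}(S,\QQ)$ is a $7$-dimensional vector space over $F:=\QQ(\alpha)=\QQ(\sqrt{-3})$, and $V=H^{1,0}(S)=H^{1}(S,\QQ)\otimes_{F,\sigma_{1}}\CC$ for the embedding $\sigma_{1}$ given by the CM type; by CM theory, maps $p\colon S\to B$ to Abelian fourfolds (up to isogeny of the map) correspond bijectively to $4$-dimensional $F$-subspaces $L\subseteq H^{1}(S,\QQ)$, via $p^{*}H^{0}(B,\Omega_{B})=W_{L}:=L\otimes_{F,\sigma_{1}}\CC$, and the resulting $\sigma_{1}$-rational $4$-planes are Zariski dense in $\mathrm{Gr}(4,V)$. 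Because the $U_{3}(3)$-action is defined over $\QQ$, the forms $q,\phi$ and the cross product descend to the $F$-structure $V_{F}\subset V$, so $W_{\iota(v)}=\mathrm{im}(v\times-)$ is $\sigma_{1}$-rational whenever $v\in V_{F}$. A nondegenerate quadratic form in $7\geq5$ variables over the totally imaginary field $F$ is isotropic by Hasse--Minkowski, so $q$ has a nonzero $F$-rational null vector, hence (the null quadric being $F$-rational) Zariski dense $F$-rational null vectors. The support map $\{q=0\}\to\mathrm{Gr}(4,V)$ has fibres which are linear subspaces of $\{q=0\}$, hence totally isotropic, hence of dimension $\leq3$; as $\{q=0\}$ is irreducible of dimension $6$, its dense set of $F$-rational null vectors is not covered by finitely many such fibres, so infinitely many distinct $\sigma_{1}$-rational supports $W_{\iota(v)}$ occur, giving infinitely many non-isogenous $p\colon S\to B$ with $S$ Lagrangian (Part 3). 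Dually, the $w\in\tilde Q$ coming from a map are exactly those with $W_{w}$ in the countable set of $\sigma_{1}$-rational $4$-planes, and for each such plane these $w$ fill $\iota(U)$ for a totally isotropic $U$ of dimension $\leq3$; thus they form a countable union of proper subvarieties of $\tilde Q$ and the very general $w\in\tilde Q$ comes from no map (Part 5). Finally $\tilde Q\cap\wedge^{2}W_{L}=\mathrm{Ker}(\phi^{2,0})\cap\wedge^{2}W_{L}$ (every vector of $\wedge^{2}W_{L}$ already has rank $\leq2$), and this is nonzero iff $W_{L}$ is the support of a rank-$2$ vector of $K$, i.e. iff $W_{L}$ lies in the proper $F$-subvariety $\mathcal{W}:=\{\,W_{\iota(v)}:q(v,v)=0,\ v\neq0\,\}\subsetneq\mathrm{Gr}(4,V)$; density of $\sigma_{1}$-rational $4$-planes then produces infinitely many $L$ with $W_{L}\notin\mathcal{W}$, which is Part 4.

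The main obstacle is the single step ``$\mathrm{rank}(v\times-)=4$, not $2$, on nonzero null vectors'' underlying Part 1: the multiplicity bookkeeping together with the cross-product identity only give rank $\in\{2,4\}$, and excluding rank $2$ genuinely uses the $G_{2}$-orbit structure (or an explicit model or computation) -- without it one cannot rule out decomposable $2$-forms abstractly. A secondary, less conceptual difficulty is the arithmetic bookkeeping in Parts 3--5: fixing the embedding $\sigma_{1}$, checking that $q,\phi$ and $\times$ descend to $\QQ$ compatibly with the $F$-structure on $H^{1}(S,\QQ)$, and that the CM structure of $A$ really identifies maps $S\to B$ up to isogeny with $F$-subspaces of $H^{1}(S,\QQ)$.
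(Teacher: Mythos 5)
Your proposal is correct in substance but takes a genuinely different, more structural route than the paper, whose proof is almost entirely explicit computer algebra: MAGMA/Maple are used to write $Ker(\phi^{2,0})$ as the column span of an explicit $21\times 7$ matrix, to check that the radical of the ideal of decomposable couples equals the radical of the ideal of the zero locus (no decomposables), to extract from the $6\times6$ minors of $A_{w}$ the explicit invariant quadric $Q$, to exhibit the point $(10+8\alpha,-7,0,0,7,0,0)$ on $\tilde{Q}(\QQ[\alpha])$, and to verify Part 4 via one fourfold cut out by an order-$2$ automorphism together with positive-definiteness of $Q$ for $M_{7}(\QQ)$-rational projections. You replace all of this by the $G_{2}$-structure on $\chi_{3}$ (the cross product and the identity $v\times(v\times y)=q(v,y)v-q(v,v)y$, giving the rank stratification of $\iota(V)$), by Hasse--Minkowski over the totally imaginary field $\QQ(\alpha)$ for the existence and Zariski-density of rational points on $\tilde{Q}$, and by a dimension count in $\mathrm{Gr}(4,V)$ for Parts 4 and 5. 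This buys real explanation: it identifies $\tilde{Q}$ with the null cone of the unique $U_{3}(3)$-invariant quadratic form (which the paper only observes a posteriori in its closing Remark), it explains why $Q$ is anisotropic over $\QQ$ yet isotropic over $\QQ(\alpha)$, and your observation that every element of $\wedge^{2}W$ with $\dim W=4$ already has rank $\leq2$ --- so that the two conditions of Part 4 are in fact equivalent --- genuinely simplifies the paper's two separate constructions. The cost is two inputs the paper gets for free from the machine: the standard fact that $U_{3}(3)=G_{2}(2)'$ sits in $G_{2}(\CC)$ with $\chi_{3}$ the restriction of the $7$-dimensional representation, and the single computation you rightly flag as the crux, namely that $v\times(-)$ has rank $4$ rather than $2$ on nonzero null vectors. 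That claim is true, and your reduction works: by transitivity of $G_{2}(\CC)$ on nonzero null vectors one checks a single vector, and in the Zorn vector-matrix model of the split octonions the null imaginary element with $\mathbf{x}=\mathbf{e}_{1}$ and all other coordinates zero has image of $v\times(-)$ spanned by the scalar direction, by $\mathbf{e}_{1}$ in the $\mathbf{x}$-block, and by a $2$-plane in the $\mathbf{y}$-block, hence of dimension $4$. With that one verification supplied, your argument is complete and recovers all five assertions of the theorem.
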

\begin{proof}
We proved in Theorem \ref{thm:The-image-of} that 
\[
H^{2,0}(A)=\wedge^{2}\chi_{3}=\chi_{3}\oplus\chi_{6}
\]
and the kernel of $\phi^{2,0}:H^{2,0}(A)\to H^{2,0}(S)$ is the $7$-dimensional
subspace with representation $\chi_{3}$. In a basis $\gamma=(e_{1},\dots,e_{7})$
of $\chi_{3}=H^{0}(S,\Omega_{S})=H^{1,0}(S)$, the two following matrices
$A,B$ are generators of the group $U_{3}(3)$: 
\[
A=\left(\begin{array}{ccccccc}
-1 & 0 & 0 & 0 & 0 & 0 & 0\\
0 & -1 & 0 & 0 & 0 & 0 & 0\\
0 & 1 & 0 & 0 & 0 & 1 & 0\\
-1 & 0 & 0 & 0 & 0 & 0 & 1\\
0 & 0 & 0 & 0 & 1 & 0 & 0\\
0 & 1 & 1 & 0 & 0 & 0 & 0\\
-1 & 0 & 0 & 1 & 0 & 0 & 0
\end{array}\right),\, B=\left(\begin{array}{ccccccc}
0 & -1 & 0 & 0 & 0 & -1 & 0\\
0 & 1 & 1 & 0 & 0 & 0 & 0\\
0 & -1 & 0 & 0 & 0 & 0 & 0\\
1 & 0 & 0 & 0 & 0 & 0 & 0\\
0 & 0 & 0 & 0 & 0 & 0 & -1\\
0 & 0 & 0 & -1 & 0 & 0 & 1\\
0 & 0 & 0 & 0 & 1 & 0 & 1
\end{array}\right).
\]
Using the basis $\beta=(e_{ij})_{1\leq i<j\leq7}$ of $\wedge^{2}\chi_{3}$
($e_{ij}=e_{i}\wedge e_{j}$) with order $e_{ij}\leq e_{st}$ if $i<s$
or $i=s$ and $j\leq t$, one computes that the subspace $Ker(\phi^{2,0})=\chi_{3}\subset\wedge^{2}\chi_{3}$
is generated by the columns of the matrix $M\in M_{21,7}$, where
$^{t}M=(N,2I_{7})$, for 
\[
N=\left(\begin{array}{cccccccccccccc}
0 & 0 & 2 & -2 & -2 & -2 & 0 & 2 & 2 & -2 & 2 & 2 & 2 & 2\\
-1 & 0 & 0 & 2 & 4 & 0 & 1 & -3 & -3 & 1 & -3 & -4 & -2 & -4\\
0 & -2 & 0 & -2 & -2 & 0 & -2 & 2 & 2 & 0 & 0 & 2 & 2 & 2\\
-1 & -2 & 2 & 0 & -2 & 0 & -1 & 1 & 3 & 1 & 1 & 0 & 0 & 2\\
-1 & 1 & -1 & 3 & 1 & 3 & 0 & -4 & -2 & 2 & 0 & -4 & -2 & -2\\
0 & 3 & -3 & 1 & -1 & 1 & 1 & -3 & -3 & -1 & 1 & 0 & -2 & -2\\
1 & 1 & 1 & 3 & 3 & 1 & 2 & -2 & 0 & 0 & 0 & -2 & 0 & -2
\end{array}\right)\in M_{7,14}
\]
and $I_{7}$ the $7\times7$ identity matrix. Knowing that, we obtain
the ideal $I_{V}$ of the algebraic set $V$ of couples $(w_{1},w_{2})\in\chi_{3}\oplus\chi_{3}$
such that $w_{1}\wedge w_{2}\in Ker(\phi^{2,0})\subset\wedge^{2}\chi_{3}.$
That ideal is generated by $14$ homogeneous quadratic polynomials
in the variables $x_{1},\dots,x_{14}$. Let $W$ be the algebraic
set of couples $(w_{1},w_{2})\in\chi_{3}\oplus\chi_{3}$ such that
$w_{1}\wedge w_{2}=0\in\wedge^{2}\chi_{3}.$ The ideal $I_{W}$ of
$W$ is generated by the $2$ by $2$ minors of the matrix
\[
L=\left(\begin{array}{ccc}
x_{1} & \dots & x_{7}\\
x_{8} & \dots & x_{14}
\end{array}\right).
\]
Since $W\subset V$, we have $Rad(I_{V})\subset Rad(I_{W})$ where
$Rad(I)$ is the radical of an ideal $I$. On the other hand, using
Maple, one can check that the $21$ minors of $L$ are in $Rad(I_{V})$,
hence $Rad(I_{W})\subset Rad(I_{V})$, thus $V=W$. \\
We therefore conclude that the kernel of $\phi^{2,0}$ contains no
decomposable elements.

A 2-vector $w$ over a characteristic $0$ field can be expressed
uniquely as $w=\sum_{i,j}a_{ij}e_{i}\wedge e_{j}$ where $a_{ij}=\lyxmathsym{\textminus}a_{ji}$.
The rank of the vector $w$ is half the rank of the (skew-symmetric)
coefficient matrix $A_{w}:=(a_{ij})_{1\leq i,j\leq7}$ of $w$ \cite[Thm 1.7 \& Remark p. 13]{Bryant}.
Thus the $2$-vector $w=a_{1}v_{1}+\dots+a_{7}v_{7}$ in $Ker(\phi^{2,0})$
(where the $v_{i},\, i=1..7$ are the vectors corresponding to the
columns of the matrix $M$) is a rank $2$ vector if and only if the
$49$ $6\times6$ minors of the matrix $A_{w}$ are $0$. The radical
of the ideal generated by these minors is principal, generated by
a homogeneous quadric in $a_{1},\dots,a_{7}$ whose associated symmetric
matrix is 
\[
Q=\left(\begin{array}{ccccccc}
7 & 3 & 3 & 1 & -3 & -3 & -5\\
3 & 7 & 3 & 3 & 1 & -3 & -3\\
3 & 3 & 7 & 3 & 3 & 1 & -3\\
1 & 3 & 3 & 7 & 3 & 3 & 1\\
-3 & 1 & 3 & 3 & 7 & 3 & 3\\
-3 & -3 & 1 & 3 & 3 & 7 & 3\\
-5 & -3 & -3 & 1 & 3 & 3 & 7
\end{array}\right).
\]
Therefore $w\in Ker(\phi^{2,0})$ has rank $2$ if and only if $(a_{1},\dots,a_{7})Q^{t}(a_{1},\dots,a_{7})=0$.

The point $(10+8\alpha,-7,0,0,7,0,0)$ lies on the associated smooth
quadric $\tilde{Q}$, therefore $\tilde{Q}(\QQ[\alpha])$ is infinite.
Let be $w$ be a 2-vector in $\tilde{Q}(\QQ[\alpha])$. The decomposable
vector $\wedge^{2}w\not=0$ has coordinates in $\QQ[\alpha]$ in the
basis $(e_{i1}\wedge.\dots\wedge e_{i4})$ of $\wedge^{4}H^{0}(S,\Omega_{S})$.
The corresponding $4$-dimensional vector space $W$ is therefore
generated by $4$ vectors $w_{1},\dots,w_{4}$ with coordinates over
$\QQ[\alpha]$ in the basis $\gamma=(e_{1},\dots,e_{7})$ of $H^{0}(S,\Omega_{S})$.
\\
One computes that the image of $\QQ[\alpha][U_{3}(3)\times\ZZ/3\ZZ]$
in $M_{7}(\QQ[\alpha])$ is $49$ dimensional over $\QQ[\alpha]$,
thus 
\[
\QQ[U_{3}(3)\times\ZZ/3\ZZ]=M_{7}(\QQ(\alpha))\,(=End(A)\otimes\QQ)
\]
in the basis $\gamma$, ($H_{1}(S,\QQ[\alpha])$ is the $\QQ[\alpha]$-vector
space generated by $e_{1},\dots,e_{k}$) and therefore there exists
a morphism $p:S\to E^{4}=B$ (where $E=\CC/\ZZ[\alpha]$) such that
$W=p^{*}H^{0}(B,\Omega_{B})$. By hypothesis the image $p(S)$ generates
$B$. By construction 
\[
\wedge^{2}p^{*}H^{0}(B,\Omega_{B})\cap Ker(\phi^{2,0})
\]
is at least one dimensional since it contains $w$, and therefore
$S$ is Lagrangian for $p$.

A contrario, the trace of an order $2$ automorphism $\sigma\in\Aut(S)\subset\aut(A)$
acting on the tangent space of $A$ at $0$ equals to $-1$, therefore
the image $B'$ of the endomorphism $p:\sigma-1_{A}$, where $1_{A}$
is the identity of $A$ is an Abelian fourfold. Using Maple, one computes
that 
\[
\wedge^{2}p^{*}H^{0}(B,\Omega_{B})\cap Ker(f)=\{0\}.
\]
Let $\vartheta:S\to A$ be the Albanese map of $S$, and let $q:A\to A$
be an endomorphism with a $4$ dimensional image and a representation
in $M_{7}(\QQ)\subset M_{7}(\QQ(\alpha))$ in the basis $\gamma$.
Since the matrix $Q$ is positive definite, we have 
\[
\wedge^{2}p^{*}H^{0}(B,\Omega_{B})\cap\tilde{Q}=\{0\},
\]
where $p$ is the map $p=q\circ\vartheta:S\to B$. Therefore $S$
is not Lagrangian with respect to $p.$ \end{proof}
\begin{rem}
Let $X$ be a surface and let $\phi^{2,0}:\wedge^{2}H^{0}(X,\Omega_{X})\to H^{0}(X,K_{X})$
be the natural map. Let be $d=\dim Ker(\phi^{2,0})$ and $q=\dim H^{0}(X,\Omega_{X})$.
In the proof of Theorem \ref{thm no decomp}, we saw that the set
of rank $k$ vectors in $Ker(\phi^{2,0})$ is a determinantal variety:
the intersection of minors of size $\geq2k+1$ of some anti-symmetric
matrix of size $q\times q$ with linear entries in $d$ variables.
 It seems to the authors quite remarkable that for Stover's surface
the set of rank $2$ vectors (obtained as the zero set of $49$ $6\times6$
minors of a size $q=7$ matrix) is an hypersurface in $Ker(\phi^{2,0})$.
That hypersurface is the only $U_{3}(3)$-invariant quadric of $U_{3}(3)$
acting on $Ker(\phi^{2,0})$.
\end{rem}
\medskip

 \vspace{0.2in}
 {\large{\setlength{\parindent}{0.2in} }}{\large \par}

{\large{Amir Džambi\'{c},}}{\large \par}

{\large{Johann Wolfgang Goethe Universität, Institut für Mathematik,}}{\large \par}

{\large{Robert-Mayer-Str. 6-8, 60325 Frankfurt am Main,}}{\large \par}

{\large{Germany}}{\large \par}

\texttt{\large{dzambic@math.uni-frankfurt.de}}{\large \par}

{\large{\vspace{0.2in}
 \setlength{\parindent}{0.5in}}}{\large \par}

{\large{Xavier Roulleau,}}{\large \par}

{\large{Laboratoire de Mathématiques et Applications, Université de
Poitiers,}}{\large \par}

{\large{Téléport 2 - BP 30179 - 86962 Futuroscope Chasseneuil }}{\large \par}

{\large{France}}{\large \par}

\texttt{\large{roulleau@math.univ-poitiers.fr}}
\end{document}